\documentclass[11pt,dvipdfmx,a4paper]{article}
\usepackage[margin=30mm]{geometry}
\usepackage[colorlinks=true,linkcolor=blue,citecolor=blue]{hyperref}
\usepackage{amssymb,amsthm,amsmath, amsfonts,ascmac, enumerate}
\numberwithin{equation}{section}

\theoremstyle{definition}
\newtheorem{thm}{Theorem}[section]
\newtheorem{defn}[thm]{Definition}
\newtheorem{lem}[thm]{Lemma}
\newtheorem{prop}[thm]{Proposition}
\newtheorem{rem}[thm]{Remark}

\newtheorem{ex}[thm]{Example}

\newcommand{\R}{\mathbb{R}}   
\newcommand{\C}{\mathbb{C}}

\newcommand{\calB}{{\mathcal{B}}}

\newcommand{\calF}{{\mathcal{F}}} 

\newcommand{\calH}{{\mathcal{H}}}

\newcommand{\calM}{{\mathcal{M}}}

\newcommand{\calP}{{\mathcal{P}}}
\newcommand{\calQ}{{\mathcal{Q}}}

\title{Monotone max-convolution and subordination functions for free max-convolution}

\author{Yuki Ueda}
\date{\today}

\begin{document}

\maketitle

{\bf Mathematics Subject Classification (2020):} 46L54, 46L53

\begin{abstract}
We show that the distribution of the spectral maximum of monotonically independent self-adjoint operators coincides with the classical max-convolution of their distributions. In free probability, it was proven that for any probability measures $\sigma,\mu$ on $\mathbb{R}$ there is a unique probability measure $\mathbb{A}_\sigma(\mu)$ satisfying $\sigma\boxplus \mu = \sigma \triangleright \mathbb{A}_\sigma(\mu)$, where $\boxplus$ and $\triangleright$ are free and monotone additive convolutions, respectively. We recall that the reciprocal Cauchy transform of $\mathbb{A}_\sigma(\mu)$ is the subordination function for free additive convolution. Motivated by this analogy, we introduce subordination functions for free max-convolution and prove their existence and structural properties. 
\end{abstract}


\section{Introduction}

In classical probability theory, not only the addition of independent random variables and the additive convolution play a central role, but so do the notions of the maximum of independent real-valued random variables and the associated max-convolution. We consider independent real-valued random variables $X,Y$ on a probability space $(\Omega,\calF,\mathbb{P})$. For any $x\in \R$, one can see that
\begin{align*}
\mathbb{P}(\max\{X,Y\} \le x ) =\mathbb{P}(X \le x, Y \le x) =\mathbb{P}(X\le x) \mathbb{P}(Y\le x).
\end{align*}
Let us set $\calP(I)$ as the class of Borel probability measures on $I\subset \R$. For $\mu,\nu \in \calP(\R)$, the {\it max-convolution} $\mu\lor \nu$ is defined as a probability measure whose distribution function is given by
$$
F_{\mu\lor \nu} (x) := F_\mu(x) F_\nu(x), \qquad x\in\R,
$$
where $F_\mu$ is a distribution function of $\mu$. 
The theory base on max-convolution has been developed both for studying the statistics of maxima in data sets and for advancing the underlying mathematical framework. One of the most important results in max-convolution is the limit theorem by Fisher--Tippett--Gnedenko \cite{FT28, G43}. For further details on max-probability theory (or extreme value theory), the reader may consult \cite{R} and the references therein.

By analogy, Ben Arous and Voiculescu \cite{BV06} introduced max-convolution and extreme value distributions in the framework of free probability theory (see \cite{MS17, NS06} and the references therein for foundational material on free probability). Let $(\calM,\varphi)$ be a $W^\ast$-probability space, that is, $\calM$ is a von Neumann algebra and $\varphi$ is a normal faithful tracial state. We denote by $\calM_{sa}$ the set of all self-adjoint operators in $\calM$. We may assume that $\calM$ acts on some Hilbert space $\calH$, that is, $\calM$ can be regarded as a $\ast$-subalgebra of $B(\calH)$ the algebra of all bounded linear operators on $\calH$. In non-commutative probability theory, unbounded operators can be treated as follows. We define $\overline{\calM}_{sa}$ as the set of all self-adjoint operators $a$ affiliated with $\calM$, that is, for any bounded Borel measurable function $f$ on $\R$, the functional calculus $f(a)$ belongs to $\calM$. It is known that for any $a,b \in \overline{\calM}_{sa}$, the closure of $a+b$ belongs to $\overline{\calM}_{sa}$. If $a\in \overline{\calM}_{sa}$, then the function
$$
x\mapsto \varphi(E_a((-\infty, x]))
$$
is a distribution function, where $E_a(B)$ denotes the spectral projection of $a$ onto a Borel set $B\subset \R$. Note that $E_a(B)= \mathbf{1}_B(a) \in \calM$ for all Borel sets $B$. There exists a unique measure $\mu\in \calP(\R)$ such that 
$$
\varphi(E_a((-\infty, x])) = F_\mu(x).
$$
The measure $\mu$ is called the {\it spectral distribution of $a$ with respect to $\varphi$}. 

In the following, we explain the concept of the maximum of non-commutative random variables. A partial order $\prec$ on the set $\calM_{sa}$, as well as the notion of the maximum of self-adjoint operators with respect to $\prec$, was introduced by \cite{A89, O71}. More precisely, for $a,b \in \calM_{sa}$, we define
$$
a\prec b \Leftrightarrow E_a((x,\infty)) \le E_b((x,\infty)) \ \text{for all } x\in \R,
$$
where $P \le Q$ denotes the usual operator order, that is, $Q-P\ge0$. For $a,b \in \calM_{sa}$, we define $a\lor b$ as the self-adjoint operator in $\calM$ whose spectral projection is given by
\begin{align}\label{eq:spectral_maximum}
E_{a\lor b}((-\infty, x]):= E_a((-\infty,x]) \land E_b((-\infty,x]), \qquad x \in \R,
\end{align}
where $P\land Q$ is the minimum of projections $P$ and $Q$ with respect to the usual operator order. By definition, we get $a,b \prec a\lor b$. The operator $a\lor b$ is called the {\it spectral maximum} of $a$ and $b$. Subsequently, Ben Arous and Voiculescu \cite{BV06} extended the definition of $\prec$ to the class $\overline{\calM}_{sa}$ and studied the spectral maximum of freely independent operators in $\overline{\calM}_{sa}$ as follows.

\begin{thm}[Ben Arous and Voiculescu, \cite{BV06}]
Let $(\calM,\varphi)$ be a $W^\ast$-probability space and $X,Y$ freely independent self-adjoint operators affiliated with $\calM$. Then we have
$$
\varphi(E_{X\lor Y}((-\infty, x])) = \max\{\varphi(E_X((-\infty, x]))+ \varphi(E_Y((-\infty, x]))-1,0 \}, \quad x\in \R.
$$
\end{thm}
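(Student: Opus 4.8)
The plan is to translate the statement into a computation about two free projections. Put $P:=E_X((-\infty,x])$ and $Q:=E_Y((-\infty,x])$, and write $\alpha:=\varphi(P)=\varphi(E_X((-\infty,x]))$ and $\beta:=\varphi(Q)=\varphi(E_Y((-\infty,x]))$. Since $P$ and $Q$ are spectral projections of the freely independent operators $X,Y$, they lie in the respective generated von Neumann algebras and are therefore free (bounded) projections, with spectral distributions the Bernoulli laws $\mu_P=(1-\alpha)\delta_0+\alpha\delta_1$ and $\mu_Q=(1-\beta)\delta_0+\beta\delta_1$. By the defining relation \eqref{eq:spectral_maximum} we have $E_{X\lor Y}((-\infty,x])=P\land Q$, so the theorem is equivalent to the identity $\varphi(P\land Q)=\max\{\alpha+\beta-1,0\}$ for free projections.

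First I would reduce the meet of projections to an atom of a free additive convolution. A vector $\xi$ satisfies $(P+Q)\xi=2\xi$ if and only if $P\xi=Q\xi=\xi$, i.e.\ $\xi\in\mathrm{ran}\,P\cap\mathrm{ran}\,Q$; hence the spectral projection $E_{P+Q}(\{2\})$ coincides with $P\land Q$. Since $P+Q$ is bounded with spectral distribution $\mu_P\boxplus\mu_Q$ by freeness, this gives
\begin{equation*}
\varphi(P\land Q)=\varphi\big(E_{P+Q}(\{2\})\big)=(\mu_P\boxplus\mu_Q)(\{2\}),
\end{equation*}
and it remains only to compute the mass that the free convolution of two Bernoulli laws places at the top of its support.

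For this I would invoke the description of the atoms of a free additive convolution: for $\mu,\nu\in\calP(\R)$ the measure $\mu\boxplus\nu$ carries an atom at $t$ precisely when $t=a+b$ with $\mu(\{a\})+\nu(\{b\})>1$, and then its mass equals $\mu(\{a\})+\nu(\{b\})-1$. For $t=2$ the only decomposition using atoms of $\mu_P$ and $\mu_Q$ is $2=1+1$, which yields $(\mu_P\boxplus\mu_Q)(\{2\})=\max\{\alpha+\beta-1,0\}$, completing the proof. As a sanity check, the inequality $\varphi(P\land Q)\ge\max\{\alpha+\beta-1,0\}$ holds for \emph{arbitrary} projections: the Kaplansky-type trace identity $\varphi(P\lor Q)+\varphi(P\land Q)=\varphi(P)+\varphi(Q)$ together with $\varphi(P\lor Q)\le1$ gives it. Thus freeness is exactly the hypothesis forcing the ranges of $P$ and $Q$ to overlap as little as possible, turning this inequality into an equality.

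The main obstacle is the free-probabilistic input in the last step, namely the exact atom mass of $\mu_P\boxplus\mu_Q$. If one prefers not to quote the general atom formula, the same value can be obtained from the explicit spectral distribution of two free projections (an arcsine-type absolutely continuous part on an interval together with atoms at $0$ and $1$), or, more self-containedly, by observing that $P\land Q$ is the strong operator limit of $(PQP)^n$ as $n\to\infty$ (the powers of the positive contraction $PQP$ converging onto its eigenspace at $1$) and evaluating $\lim_{n\to\infty}\varphi((PQP)^n)=\lim_{n\to\infty}\varphi((PQ)^n)$ via the free cumulant expansion of the alternating moments $\varphi(PQPQ\cdots)$. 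Either way, everything outside this single computation is general von Neumann algebra bookkeeping, and the genuine work lies in controlling these alternating free moments.
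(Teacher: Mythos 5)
Your argument is correct, but there is nothing in the paper to compare it against: the statement is quoted as background from Ben Arous--Voiculescu \cite{BV06}, and the paper gives no proof of it. Judged on its own, every step of your proof checks out. The projections $P=E_X((-\infty,x])$ and $Q=E_Y((-\infty,x])$ are free because freeness of affiliated operators is defined through the generated von Neumann algebras; $E_{X\lor Y}((-\infty,x])=P\land Q$ is exactly the paper's definition \eqref{eq:spectral_maximum}; the identification $E_{P+Q}(\{2\})=P\land Q$ is sound (for a unit vector $\xi$, $\langle (P+Q)\xi,\xi\rangle=2$ forces $P\xi=Q\xi=\xi$); and since the atoms of $\mu_P,\mu_Q$ lie in $\{0,1\}$, the only decomposition $2=a+b$ into atoms is $1+1$, so the Bercovici--Voiculescu atom theorem for $\boxplus$ yields $(\mu_P\boxplus\mu_Q)(\{2\})=\max\{\alpha+\beta-1,0\}$, with the degenerate cases ($\alpha\beta=0$, or $\alpha=\beta=1$) consistent. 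The one external input, the atom theorem, is from Bercovici--Voiculescu's 1998 regularity paper, which predates \cite{BV06}, so there is no circularity; you should, however, cite it explicitly, since it is not among this paper's references. Your Kaplansky-type inequality does require traciality of $\varphi$, which is available here because the paper builds traciality into the definition of a $W^\ast$-probability space.

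A point worth making explicit: your reduction is the mirror image of the technique this paper uses for the monotone analogue, Proposition \ref{prop1}. There the author identifies $I-P\lor Q$ with $E_{P+Q}(\{0\})$ (the kernel of the sum is the intersection of the kernels) and extracts the atom at $0$ of the distribution of $P+Q$ via the monotone relation $H_{P+Q}=H_P\circ H_Q$; you identify $P\land Q$ with $E_{P+Q}(\{2\})$ and extract the atom at $2$ via the free atom theorem. So your proof is not only valid but structurally the same ``meet/join of projections as an atom of the sum'' trick that drives Section \ref{sec2}. Your two alternative endgames (the explicit law of $PQP$ for a free pair of projections, or $(PQP)^n\to P\land Q$ strongly followed by a cumulant computation of the alternating moments) are plausible but only sketched; the atom-theorem route is what makes the proof complete as written.
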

Motivated by the above result, for $\mu,\nu\in \calP(\R)$, we define the {\it free max-convolution} $\mu \Box\hspace{-.89em}\lor \nu$ as the probability measure whose distribution function is given by
$$
F_{\mu \Box\hspace{-.55em}\lor \nu} (x) := \max\{F_\mu(x) + F_\nu(x)-1,0\}, \qquad x\in \R.
$$
The free max-convolution power of $\mu\in\calP(\R)$ is defined by
$$
F_{\mu^{\Box\hspace{-.47em}\lor t}}(x) := \max\{tF_\mu(x)-(t-1),0\}, \qquad x\in \R, \quad t\ge 1,
$$
see \cite{U21} for details of free max-convolution power.

In 2021, Vargas and Voiculescu \cite{VV21} introduced the Boolean max-convolution and studied extreme value theory in the context of Boolean independence (see \cite{SW97} for a detailed introduction to Boolean probability theory). Due to certain constraints in the Boolean framework and in operator theory, it is necessary to restrict attention to the set of positive operators on a Hilbert space in order to define the (non-trivial) spectral maximum of Boolean independent random variables.

\begin{thm}[Vargas and Voiculescu, \cite{VV21}]
Let $\calH$ be a Hilbert space and $\xi\in \calH$ a unit vector. We define the vector state
$$
\varphi_\xi(X):= \langle X\xi, \xi \rangle_{\calH}, \qquad X\in B(\calH).
$$
If $X,Y$ are Boolean independent positive operators on $\calH$, then
\begin{align*}
\varphi_\xi& (E_{X\lor Y}([0,x])) \\
&= \frac{\varphi_\xi (E_{X}([0,x])) \varphi_\xi (E_{Y}([0,x]))}{\varphi_\xi (E_{X}([0,x]))+ \varphi_\xi (E_{Y}([0,x]))-\varphi_\xi (E_{X}([0,x])) \varphi_\xi (E_{Y}([0,x]))}, \quad x\ge0.
\end{align*}
\end{thm}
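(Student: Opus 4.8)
The plan is to reduce the computation of the spectral-maximum distribution to the Boolean additive convolution of two two-point measures and then to read off a single atom. First I would set $p:=E_X([0,x])$ and $q:=E_Y([0,x])$, so that by the defining relation \eqref{eq:spectral_maximum} of the spectral maximum one has $E_{X\lor Y}([0,x])=p\land q$, the infimum of $p$ and $q$ in the projection lattice; the task is thus to evaluate $\varphi_\xi(p\land q)$. Rather than attacking $p\land q$ directly, I would pass to the complementary spectral projections $r:=E_X((x,\infty))=1-p$ and $s:=E_Y((x,\infty))=1-q$. A purely operator-theoretic observation is that $r+s$ is a positive operator whose kernel is exactly $\ker r\cap\ker s=\mathrm{ran}(p)\cap\mathrm{ran}(q)$ (since $\langle(r+s)\eta,\eta\rangle=\|r\eta\|^2+\|s\eta\|^2$), so its spectral projection at $0$ coincides with $p\land q$. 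Hence $\varphi_\xi(p\land q)=\varphi_\xi(E_{r+s}(\{0\}))$, i.e. the mass that the spectral distribution of $r+s$ places at the point $0$.

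The key structural input is that $r$ and $s$ are Boolean independent: they are spectral projections of the Boolean independent operators $X$ and $Y$, and---crucially---because $X,Y\ge 0$ and $x\ge 0$ the indicator $\mathbf{1}_{(x,\infty)}$ vanishes at $0$, so $r$ and $s$ lie in the (non-unital) algebras that carry the Boolean independence. Consequently the spectral distribution of $r+s$ is the Boolean additive convolution $\mu_r\uplus\mu_s$ (see \cite{SW97}), where $\mu_r=(1-\alpha)\delta_0+\alpha\delta_1$ and $\mu_s=(1-\beta)\delta_0+\beta\delta_1$, with $\alpha:=\varphi_\xi(r)=1-\varphi_\xi(E_X([0,x]))$ and $\beta:=\varphi_\xi(s)=1-\varphi_\xi(E_Y([0,x]))$.

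I would then compute $\mu_r\uplus\mu_s$ through reciprocal Cauchy transforms. From $F_{\mu_r}(z)=z(z-1)/(z-(1-\alpha))$ and the analogous expression for $\mu_s$, together with the Boolean linearization $F_{\mu_r\uplus\mu_s}=F_{\mu_r}+F_{\mu_s}-\mathrm{id}$, one checks $F_{\mu_r\uplus\mu_s}(0)=0$ and extracts the atom at $0$ via $(\mu_r\uplus\mu_s)(\{0\})=1/F_{\mu_r\uplus\mu_s}'(0)$. A short simplification gives $\bigl(1+\tfrac{\alpha}{1-\alpha}+\tfrac{\beta}{1-\beta}\bigr)^{-1}$, and substituting $\alpha=1-\varphi_\xi(E_X([0,x]))$, $\beta=1-\varphi_\xi(E_Y([0,x]))$ turns this into $\tfrac{ab}{a+b-ab}$ with $a=\varphi_\xi(E_X([0,x]))$ and $b=\varphi_\xi(E_Y([0,x]))$, which is the asserted formula.

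I expect the main obstacle to be conceptual rather than computational: one must recognize that the naive route---writing $p\land q=\mathrm{s}\text{-}\lim_{n}(pqp)^n$ and applying the Boolean factorization rule to the alternating words---fails, because $p=E_X([0,x])$ carries the vacuum direction and does not belong to the non-unital algebra carrying the independence; that route would wrongly yield $\varphi_\xi(p\land q)=\lim_n\varphi_\xi(p)^{n+1}\varphi_\xi(q)^n=0$. The correct move is precisely the pivot to the complements $r,s$, whose indicators vanish at $0$; verifying that this pivot is legitimate---that $r,s$ genuinely inherit Boolean independence and that $r+s$ therefore realizes $\mu_r\uplus\mu_s$---is exactly where the positivity hypothesis and the restriction $x\ge 0$ emphasized before the statement are used.
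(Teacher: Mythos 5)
Your proof is correct: the paper itself does not prove this theorem (it is quoted from \cite{VV21}), but your argument follows precisely the template the paper uses for its monotone analogue (Proposition \ref{prop1}) --- trade the lattice infimum $p\land q$ for the kernel projection of the sum of the complementary projections $r,s$, so that the quantity sought becomes the atom at $0$ of the distribution of $r+s$, then extract that atom from the linearizing transform of the relevant convolution (monotone composition $H_P(H_Q(z))$ there, the Boolean relation $F_{\mu_r\uplus\mu_s}=F_{\mu_r}+F_{\mu_s}-\mathrm{id}$ here), and your algebra $\bigl(\tfrac{1}{a}+\tfrac{1}{b}-1\bigr)^{-1}=\tfrac{ab}{a+b-ab}$ checks out. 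Your closing caveat is also the right one: the only nontrivial verification is that $r=\mathbf{1}_{(x,\infty)}(X)$ and $s=\mathbf{1}_{(x,\infty)}(Y)$ inherit Boolean independence, which holds exactly because these indicators vanish at $0$ (this is where $X,Y\ge 0$ and $x\ge 0$ enter), in contrast to $p,q$ themselves.
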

For $\mu,\nu \in \calP(\R_{\ge0})$, the {\it Boolean max-convolution} $\mu\cup \hspace{-.89em}\lor \nu$ is defined as the probability measure on $\R_{\ge0}$ whose distribution function is given by
$$
F_{\mu\cup \hspace{-.52em}\lor \nu}(x) := \frac{F_{\mu}(x)F_{\nu}(x)}{F_{\mu}(x)+F_{\nu}(x)-F_{\mu}(x)F_{\nu}(x)}, \quad x\ge 0,
$$
where $F_{\mu\cup \hspace{-.52em}\lor \nu}(x)=0$ if either $F_\mu(x)=0$ or $F_\nu(x)=0$. The Boolean max-convolution power of $\mu \in \calP(\R_{\ge0})$ is defined by
$$
F_{\mu^{\cup \hspace{-.47em}\lor t}}(x) = \frac{F_\mu(x)}{t-(t-1)F_\mu(x)}, \quad x\ge 0, \quad t>0,
$$
see \cite{U21} for details of Boolean max-convolution power.

In this paper, we develop the concept of max-convolution in the context of monotone independence for non-commutative random variables. Monotone independence was introduced by Muraki \cite{M00,M01} as a form of universal independence distinct from tensor, free and Boolean independences. In Section \ref{sec2}, we compute the spectral distribution of the spectral maximum of monotonically independent random variables.

\begin{thm}\label{main1}
Let $(\calH,\xi)$ be a pair of a Hilbert space $\calH$ and a unit vector $\xi \in \calH$. Consider monotonically independent self-adjoint operators $X,Y$ on $\calH$, distributed as $\mu,\nu$ with respect to $\varphi_\xi$, respectively. Then we obtain
$$
\varphi_\xi (E_{X\lor Y}((-\infty,x]))= F_{\mu\lor \nu}(x), \quad x\in\R.
$$
\end{thm}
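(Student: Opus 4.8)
The plan is to reduce the statement to a factorization of $\varphi_\xi$ on a meet of two spectral projections and then to compute that meet. Fix $x\in\R$ and abbreviate $P:=E_X((-\infty,x])$ and $Q:=E_Y((-\infty,x])$, so that $\varphi_\xi(P)=F_\mu(x)$ and $\varphi_\xi(Q)=F_\nu(x)$. By the definition \eqref{eq:spectral_maximum} of the spectral maximum, $E_{X\lor Y}((-\infty,x])=P\land Q$, and since $F_{\mu\lor\nu}(x)=F_\mu(x)F_\nu(x)$ the assertion is exactly
\[
\varphi_\xi(P\land Q)=\varphi_\xi(P)\,\varphi_\xi(Q).
\]
Thus the content of the theorem is that, for monotonically independent $X,Y$, the expectation of the lattice meet of the two spectral projections factorizes precisely as in the classical commuting case, even though $P$ and $Q$ need not commute.

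First I would upgrade monotone independence from polynomials to the spectral projections themselves. Writing $\mathbf 1_{(-\infty,x]}$ as a bounded pointwise limit of continuous functions and using the strong continuity of bounded Borel functional calculus together with the fact that $\varphi_\xi$ is a (normal) vector state, every mixed moment $\varphi_\xi(h_1(X)h_2(Y)h_3(X)\cdots)$ formed from $P$ and $Q$ is the limit of the corresponding moments of monotonically independent elements, and so is governed by the monotone reduction rules; in particular the length‑two factorization yields $\varphi_\xi(PQ)=\varphi_\xi(P)\varphi_\xi(Q)$. Next I would represent the meet analytically through the classical strong‑operator limit $P\land Q=\lim_{n\to\infty}(PQP)^n$, valid for any two projections. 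Since $(PQP)^n$ are contractions converging strongly and $\varphi_\xi$ is a vector state, this gives
\[
\varphi_\xi(P\land Q)=\lim_{n\to\infty}\varphi_\xi\big((PQP)^n\big),
\]
so the problem becomes the evaluation of the alternating monotone moments $\varphi_\xi((PQP)^n)$, with the aim of identifying their limit with $\varphi_\xi(P)\varphi_\xi(Q)$.

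The heart of the matter—and the step I expect to be the main obstacle—is exactly this evaluation of $\lim_n\varphi_\xi((PQP)^n)$. Here one must use the full, order‑dependent monotone moment structure and not merely the factorization $\varphi_\xi(PQ)=\varphi_\xi(P)\varphi_\xi(Q)$: a naive reduction that extracts interior factors one at a time yields a different (free‑ or Boolean‑type) value, so the asymmetry of monotone independence, with $Y$ carrying the larger index, must be tracked across the entire word. Concretely I would pursue one of two routes. In the first, I would realize $X,Y$ on Muraki's monotone product of the $W^\ast$‑probability spaces $(W^\ast(X),\mu)$ and $(W^\ast(Y),\nu)$, express $P$ and $Q$ through the two ampliations, and describe the intersection $\operatorname{ran}(P)\cap\operatorname{ran}(Q)$ explicitly, reading off the weight $\varphi_\xi$ assigns to it. In the second, I would derive from the monotone relations a closed recursion for $a_n:=\varphi_\xi((PQP)^n)$ and analyse its limit. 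The delicate point common to both is the control of a meet of non‑commuting projections under the non‑tracial vector state $\varphi_\xi$, where the interplay between the strong limit and the failure of $P,Q$ to commute—rather than any single algebraic identity—is where the real work lies.
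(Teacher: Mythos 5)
Your reduction of the theorem to the identity $\varphi_\xi(P\land Q)=\varphi_\xi(P)\varphi_\xi(Q)$ for $P=E_X((-\infty,x])$, $Q=E_Y((-\infty,x])$, and your use of the strong limit $(PQP)^n\to P\land Q$, are both sound. But the proposal stops exactly where the proof has to begin: the limit $\lim_{n\to\infty}\varphi_\xi((PQP)^n)$, which you yourself identify as the heart of the matter, is never evaluated. The two routes you mention (a range-intersection computation in the monotone product model, or a closed recursion for $a_n$) are named but not carried out, so what is on the page is a strategy, not a proof.

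There is also a genuine error in the one step you do argue, and it explains why the deferred computation cannot be completed along the lines you describe. Monotone independence constrains only the \emph{non-unital} algebras generated by $X$ and $Y$: the factorization rules govern $f(X)$ only for functions $f$ vanishing at $0$. This restriction is forced, not cosmetic: if the algebra of $X$ (the compressed side of the monotone product) contained the unit, then extracting $g_1(Y)$ from the alternating word $g_1(Y)\cdot 1\cdot g_2(Y)$ would make $\varphi_\xi$ multiplicative on functions of $Y$, i.e. $\nu$ would be a point mass. Since $\mathbf{1}_{(-\infty,x]}(0)=1$ for $x\ge 0$, your $P$ is \emph{not} an element to which the monotone rules apply, and bounded pointwise approximation by continuous functions cannot change this, because the approximants cannot vanish at $0$ either. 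Worse, if the monotone rules did apply to words in $P$ and $Q$, then extracting the interior $Q$'s from $(PQP)^n=P(QP)^n$ would give $\varphi_\xi((PQP)^n)=\varphi_\xi(Q)^n\varphi_\xi(P)\to 0$, contradicting the theorem; so the upgrading step, were it valid, would refute the very statement you are proving. (The pairwise identity $\varphi_\xi(PQ)=\varphi_\xi(P)\varphi_\xi(Q)$ is true, but it follows by expanding $\varphi_\xi((I-A)(I-B))=(1-\varphi_\xi(A))(1-\varphi_\xi(B))$ with $A=E_X((x,\infty))$, $B=E_Y((x,\infty))$, not from a monotone rule applied to $P,Q$.) The idea your plan lacks is precisely the paper's: pass to the complements $A$ and $B$, which do lie in the non-unital algebras; De Morgan gives $P\land Q=I-(A\lor B)$, and for the positive operators $A,B$ the projection $I-(A\lor B)$ is exactly the projection onto $\ker(A+B)=\ker A\cap\ker B$, so $\varphi_\xi(P\land Q)$ is the atom at $0$ of the spectral distribution of $A+B$. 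Monotone independence identifies that distribution as the monotone convolution of two Bernoulli laws, so its reciprocal Cauchy transform is the explicit rational function $H_A\circ H_B$, and $\lim_{z\to 0}zG_{A+B}(z)=F_\mu(x)F_\nu(x)$ follows by a short computation; this is the paper's Proposition 2.1, and it replaces all of the word combinatorics your proposal leaves open. Your routes (a) and (b) can in fact be pushed through, but only after the decomposition $P=I-A$, $Q=I-B$, which is the ingredient missing from your plan.
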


To analytically study the {\it free additive convolution} $\boxplus$ which describes the distribution of the sum of freely independent random variables (see \cite{BV93} for details), Biane \cite{B98}, Belinschi and Bercovici \cite{BB07} developed subordination functions for $\boxplus$. For $\mu \in \calP(\R)$, the Cauchy transform $G_\mu$ and the reciprocal Cauchy transform $H_\mu$ are defined by
$$
G_\mu(z) :=\int_{\R}\frac{\mu({\rm d}u)}{z-u} \quad \text{and} \quad H_\mu(z):=\frac{1}{G_\mu(z)}, \qquad z\in \C^+.
$$
In particular, if a non-commutative random variable $X$ has a distribution $\mu\in\calP(\R)$, we sometimes write $G_X(z)$ and $H_X(z)$ in place of $G_\mu(z)$ and $H_\mu(z)$, respectively. According to \cite[Theorem 4.1]{BB07}, for $\sigma,\mu \in \calP(\R)$, there exist analytic functions $w_1, w_2: \C^+\to \C^+$ such that $$
\lim_{y\to \infty} \frac{w_i(iy)}{iy}=1, \quad i=1,2,
$$
and
$$
H_{\sigma\boxplus \mu} (z)= H_\sigma(w_1(z))=H_\mu(w_2(z)),\qquad z\in \C^+,
$$
and also
$$
w_1(z)+w_2(z)=z+H_{\sigma\boxplus\mu}(z), \quad z\in \C^+.
$$
The functions $w_1,w_2$ are said {\it subordination functions for free additive convolution}. A characterization of reciprocal Cauchy transforms (see \cite[Proposition 5.2]{BV93}) implies that there exists a unique measure $\mathbb{A}_\sigma(\mu)\in\calP(\R)$ such that $w_1(z)= H_{\mathbb{A}_\sigma(\mu)}(z)$ (similarly, $w_2(z)=H_{\mathbb{A}_\mu(\sigma)}(z)$). Consequently, we have
$$
\sigma\boxplus \mu = \sigma \triangleright \mathbb{A}_\sigma(\mu),
$$
where $\triangleright$ denotes the {\it monotone additive convolution}, which gives the distribution of the sum of monotonically independent random variables and is characterized by $H_{\rho_1\triangleright \rho_2}(z)=H_{\rho_1}(H_{\rho_2}(z))$ for all $\rho_1,\rho_2\in\calP(\R)$ and $z\in \C^+$ (see \cite{M00,M01} for details).  According to \cite[Theorem 1.3]{AH16}, for any $\sigma_1,\sigma_2,\sigma,\mu_1,\mu_2\in \calP(\R)$, we have
\begin{enumerate}[\rm (i)]
\item $\mathbb{A}_{\sigma_1}\circ \mathbb{A}_{\sigma_2}=\mathbb{A}_{\sigma_1 \triangleright \sigma_2}$ on $\calP(\R)$.
\item $\mathbb{A}_\sigma(\mu_1\boxplus \mu_2) =\mathbb{A}_\sigma(\mu_1) \boxplus \mathbb{A}_\sigma(\mu_2)$.
\end{enumerate}
The following formula was not explicitly stated in \cite{AH16}, but it is easily verified that $\mathbb{A}_\sigma(\mu^{\boxplus t}) = \mathbb{A}_\sigma(\mu)^{\boxplus t}$ for any $\mu\in \calP(\R)$ and $t\ge 1$, where $\rho^{\boxplus t}$ denotes the free additive convolution power of $\rho\in \calP(\R)$ (see \cite{BB04}). Furthermore, according to \cite[Theorem 4.1]{BB07}, for any $\sigma, \mu\in \calP(\R)$, their subordination functions satisfy the following relation:
$$
H_{\sigma\boxplus \mu}(z) = H_{\mathbb{A}_\sigma(\mu)}(z) +H_{\mathbb{A}_\mu(\sigma)}(z) -z, \qquad z\in \C^+.
$$
This relation implies the formula $\sigma\boxplus \mu = \mathbb{A}_\sigma(\mu) \uplus \mathbb{A}_\mu(\sigma)$, where $\uplus$ denotes the {\it Boolean additive convolution} (see \cite{SW97}). In Section \ref{sec3}, we present the max-analogue of the results discussed above. First, we prove that, for $\sigma,\mu\in \calP(\R)$, there exists a unique measure $\mathbb{A}^\lor_\sigma(\mu)\in \calP(\R)$ such that
$$
\sigma\Box\hspace{-.92em}\lor \mu = \sigma \lor \mathbb{A}^\lor_\sigma(\mu).
$$
This is the max-analogue of the subordination relation in free probability. In contrast to the subordination function in free probability, which is described in terms of analytic transforms, the max-version admits a simple and explicit expression at the level of distribution functions. More precisely, the distribution function of $\mathbb{A}_\sigma^\lor (\mu)$ is given by
$$
F_{\mathbb{A}^\lor_\sigma(\mu)}(x) = \max\left\{ 1- \frac{\overline{F}_\mu(x)}{F_\sigma(x)},0\right\},
$$
where $\overline{F}_\mu:=1-F_\mu$ denotes the tail distribution of $\mu$. We call $F_{\mathbb{A}^\lor_\sigma(\mu)}$ the {\it subordination function for free max-convolution}. Finally, we establish the following theorem.
\begin{thm}\label{main2}
For any $\sigma_1, \sigma_2, \sigma, \mu_1,\mu_2, \mu \in \calP(\R)$ and $t\ge 1$, we have
\begin{enumerate}[\rm (1)]
\item $\mathbb{A}^\lor_{\sigma_1} \circ \mathbb{A}^\lor_{\sigma_2} = \mathbb{A}^\lor_{\sigma_1 \lor \sigma_2}$ on $\calP(\R)$.
\item $\mathbb{A}^\lor_{\sigma}(\mu_1  \Box\hspace{-.92em} \lor \mu_2)=\mathbb{A}^\lor_{\sigma}(\mu_1)\Box\hspace{-.92em} \lor \mathbb{A}^\lor_{\sigma}  (\mu_2)$.
\item $\mathbb{A}^\lor_{\sigma}(\mu^{\Box\hspace{-.55em}\lor t}) = \mathbb{A}^\lor_{\sigma}(\mu)^{\Box\hspace{-.55em}\lor t}$. 
\item If $\sigma,\mu \in \calP(\R_{\ge 0})$, then $\sigma \Box \hspace{-.92em}\lor \mu = \mathbb{A}^\lor_\sigma(\mu) \cup \hspace{-.92em} \lor \ \mathbb{A}^\lor_\mu(\sigma)$.
\end{enumerate}
\end{thm}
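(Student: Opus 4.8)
The plan is to reduce every assertion to a pointwise identity between distribution functions and then verify it by elementary algebra. Two measures in $\calP(\R)$ coincide precisely when their distribution functions agree at every $x\in\R$, so each item becomes a functional identity to be checked at a fixed point. The engine is the explicit form of the subordination function: since the defining relation $\sigma\Box\hspace{-.95em}\lor\mu=\sigma\lor\mathbb{A}^\lor_\sigma(\mu)$ reads $F_{\sigma\Box\hspace{-.53em}\lor\mu}(x)=F_\sigma(x)\,F_{\mathbb{A}^\lor_\sigma(\mu)}(x)$, one has, wherever $F_\sigma(x)>0$,
\[
F_{\mathbb{A}^\lor_\sigma(\mu)}(x)=\frac{F_{\sigma\Box\hspace{-.53em}\lor\mu}(x)}{F_\sigma(x)}=\frac{\max\{F_\sigma(x)+F_\mu(x)-1,\,0\}}{F_\sigma(x)}.
\]
Every item of the theorem then becomes an identity among such quotients, and I abbreviate the relevant values of the distribution functions by single letters.

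For item (1) I would set $a=F_{\sigma_1}(x)$, $b=F_{\sigma_2}(x)$, $c=F_\mu(x)$, and use $F_{\sigma_1\lor\sigma_2}(x)=ab$. The right-hand side is $\max\{ab+c-1,0\}/(ab)$, while applying the explicit formula twice gives the left-hand side $\max\{a+\max\{b+c-1,0\}/b-1,\,0\}/a$. Splitting according to the sign of $b+c-1$ and simplifying via $(b+c-1)/b=1+(c-1)/b$ collapses the left-hand side to $\max\{ab+c-1,0\}/(ab)$ in each region, and the constraints $a,b,c\le 1$ guarantee that both sides vanish simultaneously where the inner maxima are zero. Items (2) and (3) proceed in the same spirit: writing $s=F_\sigma(x)$ and the remaining values as single letters, substituting $F_{\mu_1\Box\hspace{-.53em}\lor\mu_2}(x)=\max\{p+q-1,0\}$ for (2) and $F_{\mu^{\Box\hspace{-.47em}\lor t}}(x)=\max\{tc-(t-1),0\}$ for (3), and then performing a finite case analysis on the signs of the nested $\max$ arguments. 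In each branch the affine algebra collapses to the desired formula, and the inequalities $0\le F\le 1$ together with $t\ge 1$ ensure that the vanishing regions agree on both sides.

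For item (4) the computation is purely algebraic. Writing $s=F_\sigma(x)$, $m=F_\mu(x)$ and $D=\max\{s+m-1,0\}$, the two subordination functions are $w_1=F_{\mathbb{A}^\lor_\sigma(\mu)}(x)=D/s$ and $w_2=F_{\mathbb{A}^\lor_\mu(\sigma)}(x)=D/m$. Substituting these into the Boolean max-convolution formula $w_1w_2/(w_1+w_2-w_1w_2)$ and clearing denominators yields $D/(s+m-D)$; the key observation is that on the set $\{D>0\}$ one has $D=s+m-1$, hence $s+m-D=1$, so the quotient equals $D=\max\{s+m-1,0\}=F_{\sigma\Box\hspace{-.53em}\lor\mu}(x)$, while on $\{D=0\}$ both sides vanish by the stated convention for $\cup\hspace{-.88em}\lor$.

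The only genuine care is at the boundary, and I expect the bookkeeping there to be delicate rather than deep. At points $x$ with $F_\sigma(x)=0$ the quotient defining $\mathbb{A}^\lor_\sigma(\mu)$ is formally undefined, but there $F_{\sigma\Box\hspace{-.53em}\lor\mu}(x)=0$ as well, so the defining relation holds with the value fixed in the existence statement and both sides of each identity are forced to $0$; similarly one must track the finitely many $x$ at which a $\max$ argument changes sign to confirm that the piecewise formulas glue into the same nondecreasing, right-continuous distribution function. Once these boundary cases are handled, the pointwise identities hold for all $x\in\R$, and equality of the corresponding measures follows.
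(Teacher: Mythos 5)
Your proposal is correct and follows essentially the same route as the paper: both reduce each item to a pointwise identity of distribution functions via the explicit formula $F_{\mathbb{A}^\lor_\sigma(\mu)}=\max\{F_\sigma+F_\mu-1,0\}/F_\sigma$ (the paper writes this as $\max\{1-\overline{F}_\mu/F_\sigma,0\}$, which is the same expression), and then verify it by case analysis on where the $\max$ arguments are positive, with item (4) collapsing via $s+m-D=1$ on $\{D>0\}$ exactly as in the paper. The algebra in your sketched cases for (2) and (3) does close up as claimed, so no gap remains.
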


Next, we examine the Boolean max-analogue of the preceding discussion. However, it is necessary to restrict the conditions for considering probability measures. For a measure $\mu \in \calP(\R_{\ge0})$, we define
$$
\alpha_\mu:=\sup\{x: F_\mu(x)=0\} \ge0.
$$
Note that, $\alpha_\mu=0$ if and only if $F_\mu(x)>0$ for all $x>0$. Furthermore, we put
$$
\calQ:= \left\{(\sigma,\mu)\in \calP(\R_{\ge0})^2:  \frac{F_\sigma}{F_\mu} \text{ is monotonically decreasing on } (\alpha_\mu,\infty) \text{ and } \mu(\{\alpha_\mu\})=0\right\}.
$$
We show that, for any $(\sigma,\mu) \in \calQ$, there exists a unique measure $\mathbb{U}_\sigma^\lor (\mu)\in \calP(\R_{\ge0})$ such that
$$
\sigma \cup \hspace{-.88em}\lor \mu = \sigma \lor \mathbb{U}_\sigma^\lor (\mu).
$$
Moreover, we prove the following analogous result.
\begin{thm}\label{main3}
The following formulas hold:
\begin{enumerate}[\rm (1)]
\item If $(\sigma_1\lor \sigma_2, \mu) \in \calQ$, then $(\sigma_i, \mu)\in\calQ$ for all $i=1,2$. In addition, if $(\sigma_1, \mathbb{U}^\lor_{\sigma_2}(\mu))\in \calQ$, then 
$(\mathbb{U}^\lor_{\sigma_1} \circ \mathbb{U}^\lor_{\sigma_2} )(\mu) = \mathbb{U}^\lor_{\sigma_1 \lor \sigma_2} (\mu)$.
\item If $(\sigma,\mu_1), (\sigma,\mu_2)\in \calQ$, then $\mathbb{U}^\lor_{\sigma}(\mu_1  \cup\hspace{-.88em} \lor \mu_2)=\mathbb{U}^\lor_{\sigma}(\mu_1)\cup\hspace{-.88em} \lor \ \mathbb{U}^\lor_{\sigma}  (\mu_2)$.
\item If $(\sigma,\mu)\in \calQ$ and $t\ge 1$, then $\mathbb{U}^\lor_{\sigma}(\mu^{\cup\hspace{-.52em}\lor t}) = \mathbb{U}^\lor_{\sigma}(\mu)^{\cup\hspace{-.52em}\lor t}$.
\item For any $\sigma \in \calP(\R_{\ge0})$ with $\sigma(\{\alpha_\sigma\})=0$, we have $\sigma^{\cup\hspace{-.52em}\lor 2} =\mathbb{U}^\lor_\sigma(\sigma)^{\Box \hspace{-.52em}\lor 2}$.
\end{enumerate}
\end{thm}

This paper is organized as follows. In Section \ref{sec2}, we study the spectral maximum of monotonically independent random variables and prove Theorem \ref{main1}. In Section \ref{sec3}, we construct subordination functions for free max-convolution. Finally, we establish a proof of Theorem \ref{main2}. In Section \ref{sec4}, we establish the Boolean analogue of Section \ref{sec3} and finally prove Theorem \ref{main3}.

\section{Monotone max-convolution}\label{sec2}

We study a concrete realization of the spectral maximum of monotonically independent projections on a Hilbert space. Let $(\calH_1,\xi_1)$ and $(\calH_2,\xi_2)$ be pairs of a Hilbert space and a unit vector. Consider $X\in B(\calH_1)_{sa}$ and $Y\in \calB(\calH_2)_{sa}$. We define $\calH:=\calH_1\otimes \calH_2$, $\xi = \xi_1 \otimes \xi_2 \in \calH$ and
$$
\widetilde{X}:= X\otimes P_{\C\xi_2}, \qquad \widetilde{Y}:= I_{\calH_1}\otimes Y,
$$
where $P_{\C\xi_2}$ is the orthogonal projection onto $\C\xi_2$. Then $\widetilde{X}, \widetilde{Y} \in B(\calH)_{sa}$ are monotonically independent with respect to $\varphi_{\xi}$. Note that
\begin{align*}
\varphi_\xi(E_{\widetilde{X}}((-\infty,t]) ) &=\varphi_{\xi_1}(E_X((-\infty,t])),\\
\varphi_\xi(E_{\widetilde{Y}}((-\infty,t])) &=\varphi_{\xi_2}(E_Y((-\infty,t]))
\end{align*}
for all $t\in \R$. Moreover,
\begin{align*}
E_{\widetilde{X}}((-\infty,t]) = \begin{cases}
E_X((-\infty,t])\otimes P_{\C\xi_2}, & t<0,\\
E_X((-\infty,t])\otimes P_{\C\xi_2} + I_{\calH_1} \otimes P_{\calH_2\ominus \C\xi_2}, & t\ge 0,
\end{cases}
\end{align*}
and 
$$
E_{\widetilde{Y}}((-\infty,t])  = I_{\calH_1} \otimes E_Y((-\infty,t])), \qquad t\in \R.
$$
The spectral maximum of $\widetilde{X}$ and $\widetilde{Y}$ is given by the spectral projections 
$$
E_{\widetilde{X}\lor \widetilde{Y}}((-\infty,t]) = E_{\widetilde{X}}((-\infty,t]) \land  E_{\widetilde{Y}}((-\infty,t])
$$ 
for all $t\in \R$, and $\langle E_{\widetilde{X}\lor \widetilde{Y}}((-\infty,t]) \xi, \xi\rangle \in [0,1]$ for $t\in \R$.

\begin{prop}\label{prop1}
Let $(\calH,\xi)$ be a pair of a Hilbert space and a unit vector. If $P,Q$ are monotonically independent projections on $\calH$ such that $\varphi_\xi(P)=1-p$ and $\varphi_\xi(Q)=1-q$ for some $p,q\in [0,1]$, then $\varphi_\xi(P\lor Q)=1-pq$.
\end{prop}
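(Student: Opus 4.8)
\emph{Plan.} The first step is to reduce the statement to a computation about the meet of two projections. Since $P$ and $Q$ are projections, their spectra lie in $\{0,1\}$, so $E_P((-\infty,x])=I-P$ and $E_Q((-\infty,x])=I-Q$ for every $x\in[0,1)$. Substituting this into the defining relation \eqref{eq:spectral_maximum} gives $E_{P\lor Q}((-\infty,x])=(I-P)\land(I-Q)$ for $x\in[0,1)$; as $P\lor Q$ is itself a projection, the left-hand side is $I-(P\lor Q)$. Hence it suffices to prove
$$
\varphi_\xi\big((I-P)\land(I-Q)\big)=pq .
$$

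The point to keep in mind is that $\varphi_\xi$ is only a vector state, not a trace, so the identity $\varphi_\xi(A\land B)+\varphi_\xi(A\lor B)=\varphi_\xi(A)+\varphi_\xi(B)$, valid for traces, is unavailable and the meet must be evaluated directly. I would first record that monotone independence together with the two marginal laws determines every joint moment of $P$ and $Q$, and that
$$
(I-P)\land(I-Q)=\lim_{n\to\infty}\big((I-P)(I-Q)(I-P)\big)^n
$$
in the strong operator topology. Since the operators on the right are contractions and $\varphi_\xi$ is strongly continuous on the unit ball, $\varphi_\xi((I-P)\land(I-Q))$ is a function of the joint moments alone, hence depends only on $p$ and $q$. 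I am therefore free to compute it in the concrete model constructed just before the proposition: take projections $P_0\in B(\calH_1)$ and $Q_0\in B(\calH_2)$ with $\varphi_{\xi_1}(P_0)=1-p$ and $\varphi_{\xi_2}(Q_0)=1-q$, and put $P:=P_0\otimes P_{\C\xi_2}$ and $Q:=I_{\calH_1}\otimes Q_0$.

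In this model the computation becomes a short piece of Hilbert-space geometry. Writing $U:=\operatorname{ran}(P_0)$ and $V:=\operatorname{ran}(Q_0)$, I would decompose the cyclic vectors along the relevant operator angles,
$$
\xi_1=\cos\alpha\,u+\sin\alpha\,u^\perp,\qquad \xi_2=\cos\theta\,v+\sin\theta\,v^\perp,
$$
where $u\in U$, $u^\perp\in U^\perp$, $v\in V$, $v^\perp\in V^\perp$ are unit vectors and $\cos^2\alpha=1-p$, $\cos^2\theta=1-q$. Since $\operatorname{ran}(I-Q)=\calH_1\otimes V^\perp$ and $\operatorname{ran}(I-P)=(U\otimes\C\xi_2)^\perp$, a direct check (using $\langle\eta,u'\otimes\xi_2\rangle=\sin\theta\,\langle\eta,u'\otimes v^\perp\rangle$ for $\eta\in\calH_1\otimes V^\perp$) identifies
$$
\operatorname{ran}\big((I-P)\land(I-Q)\big)=(\calH_1\otimes V^\perp)\ominus(U\otimes\C v^\perp).
$$
Projecting $\xi=\xi_1\otimes\xi_2$ onto this subspace then yields $\sin\theta\sin\alpha\,(u^\perp\otimes v^\perp)$, so that $\varphi_\xi((I-P)\land(I-Q))=\sin^2\alpha\,\sin^2\theta=pq$, and therefore $\varphi_\xi(P\lor Q)=1-pq$.

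The main obstacle is the middle step rather than the final calculation: because $\varphi_\xi$ fails to be tracial there is no algebraic shortcut for the meet, and one must justify that $\varphi_\xi((I-P)\land(I-Q))$ is genuinely determined by the joint moments (equivalently, that one may pass to the explicit monotone-product model) before the geometric computation can be invoked. The degenerate angles $p=0$ or $q=0$ (i.e. $\sin\alpha=0$ or $\sin\theta=0$) should be checked separately, but there the value $pq$ is attained trivially.
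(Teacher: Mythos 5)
Your proof is correct, and it takes a genuinely different route from the paper's. The two arguments share the opening reduction: for projections, $P\lor Q=I-(I-P)\land(I-Q)$, so everything comes down to showing $\varphi_\xi\bigl((I-P)\land(I-Q)\bigr)=pq$. At that point the paper notes that $(I-P)\land(I-Q)=E_{P+Q}(\{0\})$ and computes the atom at $0$ of the distribution of $P+Q$ analytically: monotone independence gives $H_{P+Q}=H_P\circ H_Q$ with $H_P(z)=\frac{z(z-1)}{z-p}$ and $H_Q(z)=\frac{z(z-1)}{z-q}$, and then $pq=\lim_{z\to 0}zG_{P+Q}(z)$. You avoid Cauchy transforms entirely: you argue that $\varphi_\xi\bigl((I-P)\land(I-Q)\bigr)$ is determined by the joint moments (via von Neumann's alternating-projection formula $\bigl((I-P)(I-Q)(I-P)\bigr)^n\to (I-P)\land(I-Q)$ and the strong-operator continuity of vector states), which licenses passing to the tensor model, where you identify the meet subspace explicitly and read off the value $pq$ by projecting $\xi$. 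Both arguments are sound. What the paper's route buys: the composition formula $H_{P+Q}=H_P\circ H_Q$ already encapsulates exactly the model-independence that you must establish by hand (your middle step), and the limit computation handles all $p,q\in[0,1]$, including the degenerate values you must check separately, in one stroke. What your route buys: it is elementary and explanatory --- the explicit identification of $\operatorname{ran}\bigl((I-P)\land(I-Q)\bigr)$ shows concretely where the factor $pq$ comes from --- and it does not require knowing that atom masses are recovered by $\lim_{z\to0}zG(z)$. One small point worth making explicit in your write-up: monotone independence is order-sensitive, so the moment-determinacy step should record that the model pair $(P_0\otimes P_{\C\xi_2},\,I_{\calH_1}\otimes Q_0)$ is monotonically independent in the same order as the pair $(P,Q)$ of the hypothesis; since the paper asserts exactly this for its model, and the conclusion $1-pq$ is symmetric in $p$ and $q$ anyway, this is harmless.
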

\begin{proof}
Notice that, $E_{P\land Q}(\{0\})= E_{P+Q}(\{0\})$. Thus, if we write $\varphi_{\xi}(P\lor Q)=1-r$, then
$$
r = \lim_{z\to 0} zG_{P+Q}(z).
$$
Since $P,Q$ are monotonically independent, we get
\begin{align*}
H_{P+Q}(z)
& = H_P(H_Q(z)) = \frac{H_Q(z)(H_Q(z)-1)}{H_Q(z)-p}\\
&= \frac{\frac{z(z-1)}{z-q} \left(\frac{z(z-1)}{z-q}-1\right)}{\frac{z(z-1)}{z-q}-p} = \frac{z(z-1)(z(z-1)-(z-q))}{z(z-1)(z-q)-p(z-q)^2}.
\end{align*}
Therefore
$$
r=\lim_{z\to 0} zG_{P+Q}(z) = \lim_{z\to 0} \frac{z(z-1)(z-q)-p(z-q)^2}{(z-1)(z(z-1)-(z-q))} =pq.
$$
Hence $\varphi_\xi(P\lor Q)=1-pq$.
\end{proof}

As a consequence of Proposition \ref{prop1}, we can compute the spectral distribution of the spectral maximum of monotonically independent self-adjoint operators.

\begin{proof}[Proof of Theorem \ref{main1}]
By Proposition \ref{prop1}, we have
\begin{align*}
\varphi_\xi (E_{X\lor Y}((-\infty, x])) 
&= \varphi_\xi (E_X((-\infty, x]) \land E_Y((-\infty, x]))\\
&=\varphi_\xi (I_{\calH} - (E_X((x,\infty)) \lor E_Y((x,\infty))))\\
&=1- \varphi_\xi(E_X((x,\infty)) \lor E_Y((x,\infty)))\\
&= 1- \left\{1- \varphi_\xi (E_X((-\infty,x])) \varphi_\xi (E_Y((-\infty,x]))\right\}\\
&=\varphi_\xi (E_X((-\infty,x])) \varphi_\xi (E_Y((-\infty,x]))\\
&=F_\mu(x) F_\nu(x)\\
&=F_{\mu\lor \nu}(x),
\end{align*}
as desired.
\end{proof}

\begin{rem}
We can consider the spectral distribution of $X\lor Y$ even when $X$ and $Y$ are unbounded self-adjoint operators via the spectral construction \eqref{eq:spectral_maximum}. For any probability measure $\mu,\nu\in \calP(\R)$, the {\it monotone max-convolution} $\mu \triangledown \nu$ is defined as the probability measure on $\R$ that coincides with the distribution $\mu\lor \nu$. Naturally, the class of monotone extreme value distributions coincides with the class of classical extreme value distributions. 

In general, by definition of monotone additive convolution, we have $\mu \triangleright \nu \neq \nu \triangleright \mu$ for any $\mu,\nu\in\calP(\R)$. However, as the above discussion shows, the monotone max-convolution is commutative: $\mu \triangledown \nu = \nu \triangledown  \mu$ for any $\mu,\nu\in\calP(\R)$.
\end{rem}

\section{Subordination functions for free max-convolution}
\label{sec3}

In this section, we construct the max-analogue of subordination functions for free additive convolution. For $\sigma,\mu \in \calP(\R)$, we define
$$
U_{\sigma,\mu}:=\{ x \in \R : \overline{F}_\mu(x) < F_\sigma(x)\}.
$$
Since $\overline{F}_\mu$ is a non-increasing function with $\lim_{x\to \infty}\overline{F}_\mu=0$ and $F_\sigma$ is a non-decreasing function with $\lim_{x\to \infty}\overline{F}_\sigma=1$, we have $U_{\sigma, \mu}  \neq \emptyset$.  Note that $U_{\sigma,\mu}=U_{\mu,\sigma}$.

\begin{prop}\label{prop2}
For any $\sigma,\mu \in \calP(\R)$, there exists a unique measure $\mathbb{A}^\lor_\sigma(\mu) \in \calP(\R)$ such that
$$
\sigma \Box\hspace{-.92em} \lor \mu = \sigma \lor \mathbb{A}^\lor_\sigma(\mu).
$$
\end{prop}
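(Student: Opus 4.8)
The plan is to translate the measure identity into one between distribution functions and then solve it explicitly. Since $F_{\sigma \Box\hspace{-.53em}\lor \mu}(x)=\max\{F_\sigma(x)+F_\mu(x)-1,0\}$ and $F_{\sigma\lor\rho}(x)=F_\sigma(x)F_\rho(x)$, the relation $\sigma\Box\hspace{-.53em}\lor\mu=\sigma\lor\mathbb{A}^\lor_\sigma(\mu)$ is equivalent to asking for a distribution function $F:=F_{\mathbb{A}^\lor_\sigma(\mu)}$ with
\[
F_\sigma(x)\,F(x)=\max\{F_\sigma(x)+F_\mu(x)-1,0\},\qquad x\in\R.
\]
On $U_{\sigma,\mu}$ one has $F_\sigma(x)>\overline{F}_\mu(x)\ge 0$ and the right-hand side equals $F_\sigma(x)+F_\mu(x)-1>0$, so the only candidate is
\[
F(x)=1-\frac{\overline{F}_\mu(x)}{F_\sigma(x)}\ \ (x\in U_{\sigma,\mu}),\qquad F(x)=0\ \ (x\notin U_{\sigma,\mu}).
\]
I would define $\mathbb{A}^\lor_\sigma(\mu)$ to be the measure with this distribution function (after the right-continuous correction below) and verify the product relation by splitting into the two cases: on $U_{\sigma,\mu}$ it gives $F_\sigma-\overline{F}_\mu=F_\sigma+F_\mu-1$, and off $U_{\sigma,\mu}$ both sides vanish.

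The real content is checking that $F$ is a bona fide distribution function. Monotonicity on $U_{\sigma,\mu}$ holds because $\overline{F}_\mu$ is non-increasing while $F_\sigma$ is positive and non-decreasing there, so $\overline{F}_\mu/F_\sigma$ is non-increasing; globally, $F\equiv 0$ off $U_{\sigma,\mu}$ and $F\ge 0$ on $U_{\sigma,\mu}$, and since $\Phi:=F_\sigma-\overline{F}_\mu$ is non-decreasing the set $U_{\sigma,\mu}=\{\Phi>0\}$ is a half-line unbounded to the right, so no ordering is violated at the junction. The normalizations $F(-\infty)=0$ and $F(+\infty)=1$ follow from $U_{\sigma,\mu}\neq\emptyset$ together with $\overline{F}_\mu(x)\to 0$ and $F_\sigma(x)\to 1$.

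The main obstacle is right-continuity at the left endpoint $a:=\inf U_{\sigma,\mu}$. Since $\Phi$ is right-continuous, $a\notin U_{\sigma,\mu}$ forces $\Phi(a)=0$, i.e. $F_\sigma(a)=\overline{F}_\mu(a)$; if this common value is positive then $\lim_{y\downarrow a}F(y)=1-\overline{F}_\mu(a)/F_\sigma(a)=0=F(a)$ and right-continuity holds, while on the interiors of $U_{\sigma,\mu}$ and its complement $F$ is manifestly right-continuous. The one genuinely delicate case is the degenerate endpoint $F_\sigma(a)=\overline{F}_\mu(a)=0$ (equivalently $F_\mu(a)=1$ and $a=\inf\mathrm{supp}\,\sigma$), where $F(y)\to 1$ as $y\downarrow a$ but $F(a)=0$; here I would restore right-continuity by resetting $F(a):=1$, which alters $F$ only at a point where $F_\sigma=0$ and hence preserves the product relation (the outcome being $\mathbb{A}^\lor_\sigma(\mu)=\delta_a$). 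I expect this boundary bookkeeping, rather than any hard estimate, to be the crux.

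For uniqueness, if two measures satisfy the relation then $F_\sigma(F_{\mathbb{A}_1}-F_{\mathbb{A}_2})\equiv 0$ forces $F_{\mathbb{A}_1}=F_{\mathbb{A}_2}$ on the half-line $\{F_\sigma>0\}$, and right-continuity propagates the agreement to its closure. Because both $F_{\sigma\Box\hspace{-.53em}\lor\mu}$ and $F_\sigma$ vanish on $\{F_\sigma=0\}$, the relation imposes no constraint there, so the statement is best read as uniqueness of the subordination function obtained by right-continuous extension from $\{F_\sigma>0\}$, namely the explicit $F$ above; pinning down the single degenerate endpoint is again where the care is needed, and everything else reduces to the elementary monotonicity and limit computations already indicated.
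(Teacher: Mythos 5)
Your proposal is correct and is essentially the paper's own construction: the same candidate $F_{\mathbb{A}^\lor_\sigma(\mu)}=\max\bigl\{1-\overline{F}_\mu/F_\sigma,\,0\bigr\}$ (set to $0$ where $F_\sigma=0$), the same monotonicity and limit checks, and the same case-split verification of $F_\sigma\,F_{\mathbb{A}^\lor_\sigma(\mu)}=\max\{F_\sigma+F_\mu-1,0\}$.

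That said, the two issues you single out as the crux are precisely the places where the paper's proof is too quick, so your extra care is genuine content rather than bookkeeping. First, the paper claims it is ``straightforward'' that $F$ is right continuous; under the stated convention this fails exactly in your degenerate case. Take $\sigma$ uniform on $[0,1]$ and $\mu=\delta_0$: then $U_{\sigma,\mu}=(0,\infty)$ and $F_\sigma(0)=\overline{F}_\mu(0)=0$, so the paper's $F$ equals $\mathbf{1}_{(0,\infty)}$, which is not a distribution function, and the measure $\mathbb{A}^\lor_\sigma(\mu)$ does not exist as constructed. Your repair, resetting $F(a):=1$ so that $\mathbb{A}^\lor_\sigma(\mu)=\delta_a$, is exactly right, and as you note it preserves the subordination identity because $F_\sigma(a)=0$. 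Second, uniqueness as literally stated is false, for the reason you identify (the identity is vacuous on $\{F_\sigma=0\}$): with $\sigma=\delta_0$ and $\mu=\delta_{-1}$ one has $\sigma\Box\hspace{-.95em}\lor \mu=\delta_0$, and \emph{every} $\rho\in\calP(\R)$ supported in $(-\infty,0]$ satisfies $\sigma\lor\rho=\delta_0$. The word ``unique'' in the paper's proof refers only to the standard bijection between distribution functions and measures, not to uniqueness of solutions of the equation, so your weakened reading (uniqueness on $\{F_\sigma>0\}$, hence on its closure by right continuity, with the remaining values fixed by convention) is the statement that is actually provable. In short: same approach, correct, and more careful than the source on the two degenerate points.
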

\begin{proof}
For $\sigma,\mu \in \calP(\R)$, we define
$$
F:= \max\left\{ 1- \frac{\overline{F}_\mu}{F_{\sigma}}, 0 \right\},
$$
where we understand $F(x)=0$ if $F_\sigma(x)=0$. It is straightforward to verify that $F=1- \frac{\overline{F}_\mu}{F_{\sigma}}$ on $U_{\sigma,\mu}$ and $F$ is right continuous. If $x_1< x_2$ in $U_{\sigma,\mu}$, then
$$
\frac{\overline{F}_\mu(x_1)}{F_{\sigma}(x_1)} \ge \frac{\overline{F}_\mu(x_2)}{F_{\sigma}(x_2)},
$$
and therefore $F(x_1) \le F(x_2)$. For sufficiently large $x$, we may assume $x\in U_{\sigma,\mu}$. Then $\lim_{x\to \infty}F(x)=1$. If $x$ is sufficiently large and negative, then $x\notin U_{\sigma,\mu}$, and hence $\lim_{x\to -\infty}F(x)=0$. Consequently, $F$ is a distribution function on $\R$. Hence, there exists a unique probability measure $\mathbb{A}_\sigma^\lor (\mu)$ on $\R$ such that $F=F_{\mathbb{A}_\sigma^\lor (\mu)}$. By definition, we have
\begin{align*}
F_{\sigma \lor \mathbb{A}_\sigma^\lor (\mu)} 
&= F_\sigma F_{\mathbb{A}_\sigma^\lor (\mu)}= F_\sigma \max\left\{1- \frac{\overline{F}_\mu}{F_{\sigma}}, 0 \right\}\\
&=\max\{F_\sigma- \overline{F}_\mu,0\} = \max\{F_\sigma+F_\mu-1,0\} = F_{\sigma\Box\hspace{-.48em}\lor \mu}.
\end{align*}
\end{proof}

\begin{defn}
For $\sigma,\mu\in \calP(\R)$, we define the probability measure $\mathbb{A}_\sigma^\lor(\mu)$ on $\R$ by
$$
F_{\mathbb{A}^\lor_\sigma(\mu)} := \max\left\{1-\frac{\overline{F}_\mu}{F_\sigma},0\right\},
$$
where $F_{\mathbb{A}^\lor_\sigma(\mu)} (x)=0$ if $F_\sigma(x)=0$.
We call the distribution function $F_{\mathbb{A}^\lor_\sigma(\mu)}$ the {\it subordination function for free max-convolution}. 
\end{defn}

Finally, we prove the main theorem as follows.

\begin{proof}[Proof of Theorem \ref{main2}]
Let us consider $\sigma_1, \sigma_2, \sigma, \mu_1,\mu_2, \mu \in \calP(\R)$ and $t\ge 1$.
\begin{enumerate}[\rm (1)]
\item For any $\mu \in \calP(\R)$, we have
\begin{align*}
F_{\mathbb{A}^\lor_{\sigma_1} \circ \mathbb{A}^\lor_{\sigma_2} (\mu)}
&=F_{\mathbb{A}^\lor_{\sigma_1} (\mathbb{A}^\lor_{\sigma_2} (\mu))}= \max \left\{1 - \frac{\overline{F}_{\mathbb{A}^\lor_{\sigma_2} (\mu)}}{F_{\sigma_1}}, 0 \right\}\\
&= \max\left\{ 1 - \frac{1}{F_{\sigma_1}} \left( 1- \max\left\{ 1- \frac{\overline{F}_\mu}{F_{\sigma_2}},0 \right\}\right),0 \right\}\\
&= \max \left\{1-\frac{1}{F_{\sigma_1}} + \max\left\{\frac{1}{F_{\sigma_1}}- \frac{\overline{F}_\mu}{F_{\sigma_1}F_{\sigma_2}},0 \right\} ,0 \right\}.
\end{align*}
On $U_{\sigma_2,\mu}$, we have
\begin{align*}
F_{\mathbb{A}^\lor_{\sigma_1} \circ \mathbb{A}^\lor_{\sigma_2} (\mu)}
&= \max\left\{1-\frac{1}{F_{\sigma_1}} + \frac{1}{F_{\sigma_1}} - \frac{\overline{F}_\mu}{F_{\sigma_1} F_{\sigma_2}},0 \right\}\\
&= \max\left\{ 1 - \frac{\overline{F}_\mu}{F_{\sigma_1 \lor \sigma_2}},0 \right\} = F_{\mathbb{A}^\lor_{\sigma_1\lor \sigma_2}(\mu)}.
\end{align*}
On $\R \setminus U_{\sigma_2,\mu}$, we obtain
$$
F_{\mathbb{A}^\lor_{\sigma_1} \circ \mathbb{A}^\lor_{\sigma_2} (\mu)} = \max\left\{1-\frac{1}{F_{\sigma_1}},0\right\} =0.
$$
On the other hand, since $1 - \frac{\overline{F}_\mu}{F_{\sigma_1} F_{\sigma_2}} \le 1 -\frac{1}{F_{\sigma_1}}<0$, we have
$F_{\mathbb{A}^\lor_{\sigma_1\lor \sigma_2}(\mu)} = 0$.

\item A straightforward calculation shows that
$$
F_{\mathbb{A}^\lor_{\sigma}(\mu_1)\Box\hspace{-.52em} \lor \mathbb{A}^\lor_{\sigma}  (\mu_2)} =\max\left\{\max\left\{ 1- \frac{\overline{F}_{\mu_1}}{F_{\sigma}},0 \right\}+ \max\left\{ 1- \frac{\overline{F}_{\mu_2}}{F_{\sigma}},0 \right\}-1,\ 0 \right\}.
$$
It is easy to verify that if $x\in U_{\sigma,\mu_1} \cap U_{\sigma, \mu_2}$ then
\begin{align*}
F_{\mathbb{A}^\lor_{\sigma}(\mu_1)\Box\hspace{-.52em} \lor \mathbb{A}^\lor_{\sigma}  (\mu_2)} (x) 
&= \max\left\{ 1- \frac{\overline{F}_{\mu_1}(x)+ \overline{F}_{\mu_2}(x)}{F_{\sigma}(x)}, 0 \right\}\\
&=\begin{cases}
1- \frac{\overline{F}_{\mu_1}(x)+ \overline{F}_{\mu_2}(x)}{F_{\sigma}(x)}, &  x\in \{\overline{F}_{\mu_1} + \overline{F}_{\mu_2}< F_\sigma\}\\
0, & x\in \{ F_\sigma \le \overline{F}_{\mu_1}+ \overline{F}_{\mu_2}\}.
\end{cases}
\end{align*}
If $x\notin U_{\sigma,\mu_1} \cap U_{\sigma, \mu_2}$, then $F_{\mathbb{A}^\lor_{\sigma}(\mu_1)\Box\hspace{-.52em} \lor \mathbb{A}^\lor_{\sigma}  (\mu_2)} (x) =0$.

On the other hand, since $\overline{F}_{\mu_1\Box\hspace{-.52em}\lor \mu_2} = \min\{\overline{F}_{\mu_1}+ \overline{F}_{\mu_2},1\}$, we have
$$
F_{\mathbb{A}^\lor_{\sigma}(\mu_1  \Box\hspace{-.52em} \lor \mu_2)} = \max \left\{ 1 - \frac{\min\{\overline{F}_{\mu_1}+ \overline{F}_{\mu_2},1\}}{F_\sigma},0 \right\}.
$$
If $x\notin U_{\sigma,\mu_1} \cap U_{\sigma, \mu_2}$, then
$$
1- \frac{\overline{F}_{\mu_1}+ \overline{F}_{\mu_2}}{F_\sigma}<-1,
$$
and therefore $F_{\mathbb{A}^\lor_{\sigma}(\mu_1  \Box\hspace{-.52em} \lor \mu_2)}(x)=0$. Next, we suppose that $x\in U_{\sigma,\mu_1} \cap U_{\sigma, \mu_2}$. Furthermore, if $ \overline{F}_{\mu_1}(x) + \overline{F}_{\mu_2}(x) < F_\sigma(x)$, then $\min\{\overline{F}_{\mu_1}(x)+ \overline{F}_{\mu_2}(x),1\}=\overline{F}_{\mu_1}(x)+ \overline{F}_{\mu_2}(x)$, and therefore
$$
F_{\mathbb{A}^\lor_{\sigma}(\mu_1  \Box\hspace{-.52em} \lor \mu_2)}(x) = 1- \frac{\overline{F}_{\mu_1}(x)+ \overline{F}_{\mu_2}(x)}{F_{\sigma}(x)}.
$$
If $F_\sigma(x) \le \overline{F}_{\mu_1}(x) + \overline{F}_{\mu_2}(x)$, then
$$
\frac{\min \{ \overline{F}_{\mu_1}(x) +\overline{F}_{\mu_2}(x), 1\} }{F_\sigma(x)}  \ge 1,
$$
and therefore $F_{\mathbb{A}^\lor_{\sigma}(\mu_1  \Box\hspace{-.52em} \lor \mu_2)}(x)=0$. Finally, we get $F_{\mathbb{A}^\lor_{\sigma}(\mu_1  \Box\hspace{-.52em} \lor \mu_2)} = F_{\mathbb{A}^\lor_{\sigma}(\mu_1)\Box\hspace{-.52em} \lor \mathbb{A}^\lor_{\sigma}  (\mu_2)}$.

\item First, we have
\begin{align*}
F_{\mathbb{A}^\lor_\sigma(\mu)^{\Box\hspace{-.47em}\lor t}}= \max \left\{ t \max\left\{1-\frac{\overline{F}_\mu}{F_\sigma}, 0 \right\}- (t-1), 0 \right\}.
\end{align*}
For $x\in U_{\sigma,\mu}$, we get 
\begin{align*}
F_{\mathbb{A}^\lor_\sigma(\mu)^{\Box\hspace{-.47em}\lor t}} (x)
&= \max\left\{1-\frac{t \overline{F}_\mu(x)}{F_\sigma(x)},0\right\}\\
&=\begin{cases}
1-\frac{t\overline{F}_\mu(x)}{F_\sigma(x)}, & x\in \{\overline{F}_\mu < \frac{1}{t} F_{\sigma}\}\\
0, & x\in \{\frac{1}{t} F_{\sigma} \le \overline{F}_\mu < F_{\sigma}\}.
\end{cases}
\end{align*}
If $x\notin U_{\sigma,\mu}$, it is easy to see that $F_{\mathbb{A}^\lor_\sigma(\mu)^{\Box\hspace{-.47em}\lor t}} (x)=0$.

On the other hand, we have
\begin{align*}
F_{\mathbb{A}^\lor_{\sigma}(\mu^{\Box\hspace{-.47em}\lor t})}= \max\left\{1-\frac{1}{F_\sigma} + \frac{1}{F_\sigma} \max\{tF_\mu-(t-1),0\},0 \right\}.
\end{align*}
Clearly, if $x\notin U_{\sigma,\mu}$, then $F_{\mathbb{A}^\lor_{\sigma}(\mu^{\Box\hspace{-.47em}\lor t})} (x)=0$. Suppose that $x\in U_{\sigma,\mu}$. Moreover if $\overline{F}_\mu(x) < \frac{1}{t}F_\sigma(x)$, then $tF_\mu- (t-1) > 1-F_\sigma$. Therefore
\begin{align*}
F_{\mathbb{A}^\lor_{\sigma}(\mu^{\Box\hspace{-.47em}\lor t})}(x) = \max\left\{1 -\frac{t\overline{F}_\mu(x)}{F_\sigma(x)},0 \right\} = 1-\frac{t\overline{F}_\mu(x)}{F_\sigma(x)}.
\end{align*}
If $\frac{1}{t}F_\sigma(x) \le \overline{F}_\mu < F_\sigma$, then 
$$
1-\frac{1}{F_\sigma}+\frac{1}{F_\sigma}\max\{tF_\mu-(t-1),0\} \le 0,
$$
and therefore $F_{\mathbb{A}^\lor_{\sigma}(\mu^{\Box\hspace{-.47em}\lor t})}(x) =0$. Consequently, we get $F_{\mathbb{A}^\lor_{\sigma}(\mu^{\Box\hspace{-.47em}\lor t})}=F_{\mathbb{A}^\lor_\sigma(\mu)^{\Box\hspace{-.47em}\lor t}} $.

\item Consider $\sigma,\mu \in \calP(\R_{\ge 0})$. One readily observes that $x\in U_{\sigma,\mu}$ if and only if $F_\sigma(x)+F_\mu(x)-1>0$ for any $\sigma,\mu \in \calP(\R_{\ge0})$.
Since $F_{\mathbb{A}^\lor_\sigma (\mu)}(x)=F_{\mathbb{A}^\lor_\mu (\sigma)}(x)=0$ for $x\notin U_{\sigma,\mu}$ (i.e. $F_\sigma(x) + F_\mu(x)-1\le0$), we get $F_{\mathbb{A}^\lor_\sigma(\mu) \cup \hspace{-.52em} \lor \mathbb{A}^\lor_\mu(\sigma)}  (x)=0$. For $x\in U_{\sigma,\mu}$ (i.e. $F_\sigma(x) + F_\mu(x)-1>0$), we have
\begin{align*}
F_{\mathbb{A}^\lor_\sigma(\mu) \cup \hspace{-.52em} \lor \mathbb{A}^\lor_\mu(\sigma)} (x) 
&= \frac{F_{\mathbb{A}^\lor_\sigma (\mu)}(x)F_{\mathbb{A}^\lor_\mu (\sigma)}(x)}{F_{\mathbb{A}^\lor_\sigma (\mu)}(x)+F_{\mathbb{A}^\lor_\mu (\sigma)}(x)-F_{\mathbb{A}^\lor_\sigma (\mu)}(x)F_{\mathbb{A}^\lor_\mu (\sigma)}(x)}\\
&=\frac{1 -\frac{\overline{F}_\mu(x)}{F_\sigma(x)}-\frac{\overline{F}_\sigma(x)}{F_\mu(x)}+\frac{\overline{F}_\sigma(x)\overline{F}_\mu(x)}{F_\sigma(x)F_\mu(x)}}{1-\frac{\overline{F}_\sigma(x)\overline{F}_\mu(x)}{F_\sigma(x)F_\mu(x)}}\\
&=F_\sigma(x)+F_\mu(x)-1.
\end{align*}
Consequently, the desired result is obtained.
\end{enumerate}
\end{proof}

To conclude this section, we present examples of subordination functions for free max-convolution together with explicit formulas. For $\alpha>0$, let $\mu_\alpha$ denote the Dagum distribution (also called the Boolean Fr\'{e}chet distribution), defined by
\begin{align}\label{eq:Dagum}
F_{\mu_\alpha}(x) := \frac{1}{1+x^{-\alpha}} \mathbf{1}_{(0,\infty)}(x),
\end{align}
and let $\nu_\alpha$ denote the Pareto distribution (also called the free Fr\'{e}chet distribution), defined by
\begin{align}\label{eq:Pareto}
F_{\nu_\alpha}(x):= (1-x^{-\alpha})\mathbf{1}_{[1,\infty)}(x).
\end{align}

\begin{ex}\label{ex:freesub}
Let $\mu \in \calP(\R_{\ge0})$. By Theorem \ref{main2} (4), we have $\mu^{\Box\hspace{-.52em} \lor 2} = \mathbb{A}^\lor_\mu(\mu)^{\cup \hspace{-.47em}\lor 2}$. In particular, this yields the representation $\mathbb{A}^\lor_\mu(\mu) = (\mu^{\Box\hspace{-.52em} \lor 2})^{\cup \hspace{-.52em}\lor 1/2}$. 

In \cite{U22}, we introduce the max-Belinschi-Nica semigroup $\{\mathbb{B}_t^\lor \}_{t\ge0}$, defined by
$$
\mathbb{B}_t^\lor: \calP(\R_{\ge0})\to \calP(\R_{\ge0}), \quad \mu \mapsto (\mu^{\Box\hspace{-.52em} \lor (1+t)})^{\cup \hspace{-.52em}\lor \frac{1}{1+t}}, \qquad t\ge0,
$$
which provides an important connection between Boolean and free max-convolutions. It is known from \cite[Proposition 10]{U22} that $\mathbb{B}_1^\lor (\mu_\alpha)= \nu_\alpha$ for all $\alpha>0$.

Combining the above observations, we obtain 
$$
\mathbb{A}^\lor_{\mu_\alpha}(\mu_\alpha)=\mathbb{B}_1^\lor(\mu_\alpha)=\nu_\alpha, \qquad \alpha>0,
$$
and consequently,
$$
\mu_\alpha \Box\hspace{-.92em} \lor \mu_\alpha= \nu_\alpha \cup \hspace{-.90em}\lor \nu_\alpha = \mu_\alpha \lor \nu_\alpha, \qquad \alpha>0.
$$
\end{ex}

\begin{ex}
Let $\alpha,\beta>0$. A straightforward computation shows that
$$
F_{\mu_\alpha \Box\hspace{-.52em}\lor \mu_\beta}(x) = \frac{1-x^{-(\alpha+\beta)}}{(1+x^{-\alpha})(1+x^{-\beta})} \mathbf{1}_{[1,\infty)}(x).
$$
Define $\nu_{\alpha,\beta}:= \mathbb{A}_{\mu_\alpha}^\lor(\mu_\beta)$. From the preceding example, we observe that $\nu_{\alpha,\alpha}=\nu_\alpha$. By definition, we obtain
$$
F_{\nu_{\alpha,\beta}}(x) = \frac{1-x^{-(\alpha+\beta)}}{1+x^{-\beta}}\mathbf{1}_{[1,\infty)}(x).
$$
By symmetry, it follows that
$$
F_{\nu_{\beta,\alpha}}(x) = \frac{1-x^{-(\alpha+\beta)}}{1+x^{-\alpha}}\mathbf{1}_{[1,\infty)}(x).
$$
Consequently, $\mu_\alpha \Box\hspace{-.92em} \lor \mu_\beta= \nu_{\alpha,\beta} \cup \hspace{-.90em}\lor \nu_{\beta,\alpha} = \mu_\alpha \lor \nu_{\alpha,\beta}$.
\end{ex}

\section{Subordination functions for Boolean max-convolution}
\label{sec4}

In this section, we investigate the Boolean analogue of Proposition \ref{prop2} and Theorem \ref{main2}. Recall that
$$
\calQ:= \left\{(\sigma,\mu)\in \calP(\R_{\ge0})^2:  \frac{F_\sigma}{F_\mu} \text{ is monotonically decreasing on } (\alpha_\mu,\infty) \text{ and } \mu(\{\alpha_\mu\})=0\right\}.
$$
Note that, if $\sigma\in \calP(\R_{\ge0})$ satisfies $\sigma(\{\alpha_\sigma\})=0$, then $(\sigma,\sigma) \in \calQ$. Thus, $\calQ \neq \emptyset$.
\begin{lem}\label{lem:alpha}
If $(\sigma,\mu)\in \calQ$, then $\alpha_\sigma \le \alpha_\mu$.
\end{lem}
\begin{proof}
If $\alpha_\mu < \alpha_\sigma$, then there exists $x \in (\alpha_\mu, \alpha_\sigma)$ such that $F_\mu(x)>0$ and $F_\sigma(x)=0$. Then $F_\sigma(x)/F_\mu(x)=0$. But, for $y>\alpha_\sigma$, we have $F_\sigma(y)/F_\mu(y)>0$. Consequently $F_\sigma/F_\mu$ is not monotonically decreasing on $(\alpha_\mu,\infty)$, and hence $(\sigma,\mu) \notin \calQ$.
\end{proof}

\begin{prop}\label{prop:Booleansub}
Let us consider $(\sigma,\mu)\in \calQ$. Then there exists a unique measure $\mathbb{U}^\lor_\sigma(\mu) \in \calP(\R_{\ge0})$ such that
$$
\sigma \cup \hspace{-.89em}\lor \mu = \sigma \lor \mathbb{U}^\lor_\sigma(\mu).
$$
\end{prop}
\begin{proof}
For $(\sigma,\mu) \in\calQ$ and $\sigma\neq \mu$, we define
\begin{align}\label{def:Booleanmaxsub}
F:=\begin{cases}
\dfrac{F_\mu}{F_\sigma+ F_\mu - F_\sigma F_\mu}, & x\ge \alpha_\mu\\
0, & x<\alpha_\mu,
\end{cases}
\end{align}
where we always understand $F(\alpha_\mu)=0$ when $\alpha_\mu=\alpha_\sigma$.
By the definition of $F$, it is right-continuous on $\R$, $\lim_{x\to\infty} F(x)=1$ and $\lim_{x\to -\infty}F(x)=0$. For any $\alpha_\mu < x<y$, we get $F_\sigma(x)/F_\mu(x) \ge F_\sigma(y)/F_\mu(y)$, and therefore
\begin{align*}
F(x) = \frac{1}{\frac{F_\sigma(x)}{F_\mu(x)} +1-F_\sigma(x)} \le \frac{1}{\frac{F_\sigma(y)}{F_\mu(y)} +1-F_\sigma(y)} = F(y).
\end{align*}
Thus, $F$ is a distribution function of some probability measure $\mathbb{U}^\lor_\sigma(\mu)$ on $\R_{\ge0}$, and also $\sigma \cup \hspace{-.89em}\lor \mu = \sigma \lor \mathbb{U}^\lor_\sigma(\mu)$.

For $(\sigma, \sigma) \in \calQ$, we define
\begin{align}\label{def:Booleanmaxsub2}
F:= 
\begin{cases}
\dfrac{1}{2-F_\sigma}, & x\ge \alpha_\sigma\\
0, & x<\alpha_\sigma.
\end{cases}
\end{align}
One can see that $F$ is a distribution function of some probability measure $\mathbb{U}_\sigma^\lor(\sigma)$ and $\sigma \cup \hspace{-.89em}\lor \sigma = \sigma \lor \mathbb{U}^\lor_\sigma(\sigma)$.
\end{proof}

\begin{defn}
For $(\sigma,\mu) \in \calQ$, we define $\mathbb{U}_\sigma^\lor(\mu)$ as the probability measure on $\R_{\ge0}$ whose distribution function is given by \eqref{def:Booleanmaxsub} or \eqref{def:Booleanmaxsub2}. We call the distribution function $F_{\mathbb{U}^\lor_\sigma(\mu)}$ the {\it subordination function for Boolean max-convolution}. Notice that, $\alpha_{\mathbb{U}_\sigma^\lor(\mu)}=\alpha_\mu$.
\end{defn}

\begin{proof}[Proof of Theorem \ref{main3}]
\begin{enumerate}[\rm (1)]
\item We consider the case $(\sigma_1\lor \sigma_2,\mu)\in \calQ$. Assume that $F_{\sigma_1}/F_{\mu}$ is not monotonically decreasing on $(\alpha_\mu,\infty)$, then
$\frac{F_{\sigma_1}F_{\sigma_2}}{F_\mu} = \frac{F_{\sigma_1}}{F_\mu} \cdot F_{\sigma_2}$
is also not monotonically decreasing on $(\alpha_\mu,\infty)$. This contradicts the assumption that $(\sigma_1\lor \sigma_2,\mu)\in \calQ$. Hence, $F_{\sigma_1}/F_{\mu}$ must be monotonically decreasing on $(\alpha_\mu,\infty)$, and therefore $(\sigma_1,\mu)\in \calQ$. By the same argument, we also obtain $(\sigma_2,\mu)\in\calQ$. By Proposition \ref{prop:Booleansub}, the measure $\mathbb{U}_{\sigma_2}^\lor (\mu)$ is well-defined. Furthermore, since $(\sigma_1, \mathbb{U}_{\sigma_2}^\lor(\mu))\in \calQ$, the measure $(\mathbb{U}^\lor_{\sigma_1} \circ \mathbb{U}^\lor_{\sigma_2} )(\mu)$ is also well-defined. A direct computation shows that
\begin{align*}
F_{\mathbb{U}^\lor_{\sigma_1} \circ \mathbb{U}^\lor_{\sigma_2} (\mu)} 
&= F_{\mathbb{U}^\lor_{\sigma_1} (\mathbb{U}^\lor_{\sigma_2}(\mu) )}
= \frac{F_{\mathbb{U}^\lor_{\sigma_2}(\mu)}}{F_{\sigma_1}+F_{\mathbb{U}^\lor_{\sigma_2}(\mu)} - F_{\sigma_1}F_{\mathbb{U}^\lor_{\sigma_2}(\mu)} }\\
&=\frac{\frac{F_\mu}{F_{\sigma_2}+F_\mu - F_{\sigma_2}F_\mu}}{F_{\sigma_1}+\frac{F_\mu}{F_{\sigma_2}+F_\mu - F_{\sigma_2}F_\mu} - F_{\sigma_1}\cdot \frac{F_\mu}{F_{\sigma_2}+F_\mu - F_{\sigma_2}F_\mu}}\\
&= \frac{F_\mu}{F_{\sigma_1}F_{\sigma_2} + F_\mu - F_{\sigma_1}F_{\sigma_2}  F_\mu}=F_{\mathbb{U}_{\sigma_1\lor \sigma_2}^\lor (\mu)}.
\end{align*}

\item Consider $(\sigma,\mu_1), (\sigma,\mu_2)\in \calQ$. A simple computation shows that 
\begin{align*}
\frac{F_\sigma}{F_{\mu_1\cup\hspace{-.52em}\lor \mu_2}} 
= \frac{F_\sigma}{F_{\mu_1}F_{\mu_2}}(F_{\mu_1}+F_{\mu_2}- F_{\mu_1}F_{\mu_2})= \frac{F_\sigma}{F_{\mu_1}}+ \frac{F_\sigma}{F_{\mu_2}} - F_\sigma.
\end{align*}
Since $\alpha_{\mu_1\cup\hspace{-.52em}\lor \mu_2}=\max\{\alpha_{\mu_1}, \alpha_{\mu_2}\}$, the assumption implies that $F_\sigma/F_{\mu_1}$ and $F_\sigma/F_{\mu_2}$ are monotonically decreasing on $(\alpha_{\mu_1\cup\hspace{-.52em}\lor \mu_2}, \infty)$. Clearly, $(\mu_1 \cup\hspace{-.98em}\lor \mu_2) (\{\alpha_{\mu_1\cup\hspace{-.52em}\lor \mu_2}\})=0$. Consequently, $(\sigma, \mu_1\cup\hspace{-.89em}\lor \mu_2) \in \calQ$. By Proposition \ref{prop:Booleansub}, the measure $\mathbb{U}^\lor_{\sigma}(\mu_1  \cup\hspace{-.89em} \lor \mu_2)$ is well-defined. One can easily see that
\begin{align*}
F_{\mathbb{U}^\lor_{\sigma}(\mu_1  \cup\hspace{-.52em} \lor \mu_2)} = \frac{F_{\mu_1}F_{\mu_2}}{F_\sigma (F_{\mu_1}+F_{\mu_2}-2F_{\mu_1}F_{\mu_2})+F_{\mu_1}F_{\mu_2}} = F_{\mathbb{U}^\lor_{\sigma}(\mu_1)\cup\hspace{-.52em} \lor  \mathbb{U}^\lor_{\sigma}  (\mu_2)}.
\end{align*}

\item Consider $(\sigma,\mu)\in \calQ$ and $t\ge 1$. Then
\begin{align*}
\frac{F_\sigma}{F_{\mu^{\cup\hspace{-.35em}\lor t}}} = \frac{F_\sigma}{F_\mu}(t-(t-1)F_\mu)
\end{align*}
is monotonically decreasing on $(\alpha_{\mu^{\cup\hspace{-.47em}\lor t}},\infty)=(\alpha_\mu,\infty)$. Clearly, $\mu^{\cup\hspace{-.52em}\lor t}(\{\alpha_{\mu^{\cup\hspace{-.47em}\lor t}}\})=0$. By Proposition \ref{prop:Booleansub}, the measure $\mathbb{U}^\lor_{\sigma}(\mu^{\cup\hspace{-.52em}\lor t})$ is well-defined. It is easy to obtain
$$
F_{\mathbb{U}^\lor_{\sigma}(\mu^{\cup\hspace{-.47em}\lor t})}=\frac{F_\mu}{tF_\sigma \overline{F}_\mu + F_\mu}=F_{\mathbb{U}^\lor_{\sigma}(\mu)^{\cup\hspace{-.47em}\lor t}}.
$$

\item For $\sigma \in \calP(\R_{\ge0})$ with $\sigma(\{\alpha_\sigma\})=0$, we obtain
\begin{align*}
F_{\mathbb{U}^\lor_\sigma(\sigma)^{  \Box\hspace{-.47em}\lor 2}} 
=\max\left\{ \frac{F_\sigma }{2- F_\sigma },0\right\}
=\max\{ F_{\sigma^{\cup \hspace{-.47em}\lor 2}}, 0\} =F_{\sigma^{\cup \hspace{-.47em}\lor2}}.
\end{align*}
\end{enumerate}
\end{proof}

\begin{rem}
If $(\sigma,\mu),(\mu,\sigma)\in \calQ$, then $\alpha_\sigma= \alpha_\mu$ by Lemma \ref{lem:alpha}. Hence $F_\sigma/F_\mu$ is monotonically decreasing and monotonically increasing on $(\alpha_\mu,\infty)$, and therefore $F_\sigma/F_\mu$ coincides with some constant $k$ on $(\alpha_\mu,\infty)$. Since $\lim_{x\to \infty} F_\sigma(x)=\lim_{x\to \infty} F_\mu(x)=1$, it must be $k=1$. Here, we assume that $\sigma \neq \mu$. Then there exists $x\in (\alpha_\mu,\infty)$ such that $F_\sigma(x) \neq F_\mu(x)$, but this contradicts the result that $F_\sigma/F_\mu=1$ on $(\alpha_\mu,\infty)$. Consequently, we obtain $\sigma=\mu$.  Therefore, we can not establish the Boolean analogue of Theorem \ref{main2} (4) when $\sigma \neq \mu$. 
\end{rem}

To conclude this section, we give an explicit example of a subordination function for Boolean max-convolution. Recall that $\mu_\alpha$ and $\nu_\alpha$ denote the Dagum and Pareto distributions with parameter $\alpha>0$, respectively, introduced in \eqref{eq:Dagum} and \eqref{eq:Pareto}.

\begin{ex}
One can verify that $\alpha_{\mu_\alpha}=0$, $\alpha_{\nu_\alpha}=1$, and that
$$
\frac{F_{\mu_\alpha}(x)}{F_{\nu_\alpha}(x)} = \frac{1}{1-x^{-2\alpha}}
$$
is monotonically decreasing on $(1,\infty)$. Therefore $(\mu_\alpha,\nu_\alpha)\in \calQ$. By \eqref{def:Booleanmaxsub}, we obtain
$$
F_{\mathbb{U}_{\mu_\alpha}^\lor (\nu_\alpha)}(x) = \frac{1-x^{-2\alpha}}{1+x^{-\alpha}-x^{-2\alpha}}\mathbf{1}_{[1,\infty)}(x).
$$
On the other hand, $(\nu_\alpha,\mu_\alpha)\notin \calQ$ by Lemma \ref{lem:alpha}.
\end{ex}

\subsection*{Future research directions}

\begin{enumerate}[\rm (i)]
\item According to \cite{HU21} and \cite{U21}, there are interesting relationships between additive convolutions and max-convolutions in the free, Boolean, and classical settings via the law of large numbers for free multiplicative convolution. Can we find a similar relationship between monotone additive convolution and monotone (classical) max-convolution?

\item In Section \ref{sec4}, we defined the subordination functions for Boolean max-convolution. Can we define the subordination functions for Boolean additive convolution? That is, can we find the class of pairs $(\sigma,\mu)$ of probability measures on $\R$, such that there exists a unique measure $\mathbb{U}_\sigma(\mu)$ such that $\sigma \uplus \mu = \sigma \triangleright \mathbb{U}_\sigma(\mu)$?

\item Can we explain why monotone max-convolution coincides with classical max-convolution through an investigation of graph products? More precisely, we aim to construct an extreme value theory based on BMT independence, which is related to graph products (see \cite{AMVB25}), and to explain the above phenomenon within the framework of BMT independence.
\end{enumerate}

\subsection*{Acknowledgement}
The author is sincerely grateful to the referee for a careful and thorough reading of the manuscript, as well as for valuable comments and suggestions.

\bibliographystyle{amsplain}

\vspace{8mm}

\hspace{-6mm}{\bf Yuki Ueda}\\
Faculty of Education, Department of Mathematics, Hokkaido University of Education, 9 Hokumon-cho, Asahikawa, Hokkaido 070-8621, Japan\\
E-mail: ueda.yuki@a.hokkyodai.ac.jp

\end{document}